\theoremstyle{definition}
\newtheorem{exm}{Example}
\DeclareMathOperator{\Poly}{Poly}
\numberwithin{equation}{section}
\numberwithin{figure}{section}
\theoremstyle{plain}
\newtheorem{thm}{\protect\theoremname}
  \theoremstyle{plain}
\newtheorem{lemma}[equation]{Lemma}
\providecommand{\theoremname}{Theorem}
\begin{document}
\title{Simple proofs and expressions for the restricted partition function and its polynomial part}

\author{Sinai Robins and Christophe Vignat}
\address{Sinai Robins, Departamento de Ciência da Computação, Instituto de Matemática e Estatística,
Universidade de São Paulo, Brasil}
\email{sinai.robins@gmail.com}
\address{Christophe Vignat, L.S.S., CentraleSupelec, Universit\'{e} Paris-Sud, Orsay, France and Department of Mathematics, 
Tulane University, New Orleans, USA}
\email{christophe.vignat@u-psud.fr}
\subjclass[2010]{11P81, 05A17} 
\keywords{restricted partitions, Fourier-Dedekind sums}
\begin{abstract}
In this note, we provide a simple derivation of expressions for the restricted partition function and its polynomial part. 
Our proof relies on elementary algebra on rational functions and a lemma that expresses the polynomial part as an average of the partition function.
\end{abstract}
\maketitle

\section{Introduction}

We recall that the restricted partition function is defined by 
\[
p_{\mathbf{a}}\left(n\right)=\#\left\{ \left(x_{1},\dots,x_{r}\right)\in\mathbb{N}^{r};\sum_{i=1}^{r}a_{i}x_{i}=n\right\},
\]
for any fixed integer vector
$\mathbf{a}=\left(a_{1},\dots,a_{r}\right)\in \mathbb{N}^{r}$.
It is known that the restricted partition function has the following form  \cite[Theorem 1.8]{Beck}:
\begin{align*}
 p_{\bf a}(n)        &= \Poly_{\bf a}(n)  \\
&+ s_{-n}(a_2, a_3, \dots, a_r; a_1) + s_{-n}(a_1, a_3, a_4, \dots, a_r; a_2)  + 
\cdots +
s_{-n}(a_1, a_2, a_3, \dots, a_{r-1}; a_r),
\end{align*}
where we use here the Fourier-Dedekind sums, defined by
\begin{equation}  \label{fourierdedsum}
s_n \left( a_1, a_2, \dots, a_m; b \right) := \frac 1 b \sum_{j=1}^{b-1} \frac{ \xi_b^{ jn } }{ \left( 1 - \xi_b^{ j a_1 } \right) \left( 1 - \xi_b^{ j a_2 } \right) \cdots \left( 1 - \xi_b^{ j a_m } \right) },
\end{equation}
and where $\Poly_{\bf a}(n)$ is known as the polynomial part of  $p_{\bf a}(n)$ (see \cite{Beck}, equation 8.4, and exercise 14.3 for a Barnes-Bernoulli type formulation of  $\Poly_{\bf a}(n)$).
Throughout, $\xi_b :=   e^{\frac{2 \pi i}{b}}$ denotes the $b^{th}$ root of unity. 
We also recall that there exist $D=lcm\left(a_{1},\dots,a_{r}\right)$ polynomials $\left\{ q_{k}\left(n\right)\right\} _{0\le k\le D-1}$, called the constituent polynomials, such that for $n\equiv k \pmod D$, we have 
$p_{\mathbf{a}}\left(n\right)=q_{k}\left(n\right)$.
Indeed, we have, for $n \equiv k \pmod D$:
\begin{align}   \label{constituent.Fourier} 
q_{k}(n)        &= \Poly_{\bf a}(n)      \\   
&+ s_{-k}(a_2, a_3, \dots, a_r; a_1) + s_{-k}(a_1, a_3, a_4, \dots, a_r; a_2)  + 
\cdots +
s_{-k}(a_1, a_2, a_3, \dots, a_{r-1}; a_r), \nonumber
\end{align}
and we see that all of the constituent polynomials $q_k(n)$ have the same polynomial part 
$\Poly_{\bf a}(n)$, which is independent of $k$. 
The main result in the recent paper by Cimpoea\c{s} and Nicolae \cite{Cimpoeas} is the following simplified expression for the restricted partition function.  
\begin{thm}  \label{CimpoeasAndNicolae}
The restricted partition function may be expressed as
\begin{equation}
p_{\mathbf{a}}\left(n\right)=\sum_{\stackrel{\mathbf{j}\in\mathbf{J}}{a_{1}j_{1}+\dots+a_{r}j_{r}\equiv \ n \mod D}}
\binom{\frac{n-a_{1}j_{1}-\dots-a_{r}j_{r}}{D}+r-1}{r-1}\label{eq:1}
\end{equation}
\end{thm}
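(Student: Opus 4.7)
The plan is to establish \eqref{eq:1} by a direct manipulation of the ordinary generating function
\[
F(z) := \sum_{n\ge 0} p_{\mathbf{a}}(n)\,z^n = \prod_{i=1}^{r}\frac{1}{1-z^{a_i}}.
\]
Since each $a_i$ divides $D = \mathrm{lcm}(a_1,\dots,a_r)$, every factor admits a finite geometric expansion
\[
\frac{1}{1-z^{a_i}} = \frac{1}{1-z^D}\sum_{j_i=0}^{D/a_i-1} z^{a_i j_i},
\]
and multiplying the $r$ such identities together yields
\[
F(z) = \frac{1}{(1-z^D)^r}\sum_{\mathbf{j}\in \mathbf{J}} z^{a_1 j_1+\cdots+a_r j_r},
\]
where $\mathbf{J}=\prod_{i=1}^{r}\{0,1,\ldots,D/a_i-1\}$ is the natural index set referred to in the theorem statement.

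Next I would expand the remaining factor using the negative binomial series
\[
\frac{1}{(1-z^D)^r} = \sum_{m\ge 0}\binom{m+r-1}{r-1} z^{Dm},
\]
and read off the coefficient of $z^n$ in the resulting double sum. For a fixed $\mathbf{j}\in \mathbf{J}$, the monomial $z^{a_1 j_1+\cdots+a_r j_r}$ contributes to $[z^n]F(z)$ precisely when $a_1 j_1+\cdots+a_r j_r\equiv n\pmod{D}$ and $m:=(n-a_1j_1-\cdots-a_rj_r)/D\ge 0$, in which case it contributes exactly $\binom{m+r-1}{r-1}$. Summing over admissible $\mathbf{j}$ reproduces \eqref{eq:1}.

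The argument is a short generating function computation and presents no serious obstacle. The only point requiring attention is the bookkeeping associated with the index set $\mathbf{J}$, and the verification that terms with $a_1 j_1 + \cdots + a_r j_r > n$ either vanish under the standard binomial coefficient convention or are excluded from the sum by the positivity constraint $m\ge 0$, so that either interpretation is consistent with the identity \eqref{eq:1}.
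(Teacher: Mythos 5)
Your proposal is correct and follows essentially the same route as the paper: both rewrite each factor $\frac{1}{1-z^{a_i}}$ over the common denominator $1-z^{D}$, expand $\frac{1}{(1-z^{D})^{r}}$ by the negative binomial series, and extract the coefficient of $z^{n}$. Your closing remark about terms with $a_1 j_1+\cdots+a_r j_r>n$ matches the paper's observation that the binomial coefficient vanishes for negative $m$, so the two arguments coincide in every essential respect.
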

where $D=lcm\left(a_{1},\dots,a_{r}\right)$, and where the summation index runs over the box 
\[
\mathbf{J}=\left\{ \mathbf{j}=\left(j_{1},\dots,j_{r}\right);0\le j_{1}\le\frac{D}{a_{1}}-1,\dots,0\le j_{r}\le\frac{D}{a_{r}}-1\right\}.
\]
Here we give a simple proof of Theorem \ref{CimpoeasAndNicolae}, and we also 
recover the following 
expression for the polynomial part $\text{Poly}_{\mathbf{a}}\left(n\right)$ of the restricted partition function, in the same spirit as Cimpoeas and Nicolae \cite{Cimpoeas}.

\begin{thm}
The polynomial part of the restricted partition function may be expressed as
\begin{equation}
\Poly_{\mathbf{a}}\left(n\right)=\frac{1}{D}\sum_{\mathbf{j}\in\mathbf{J}}\binom{\frac{n-a_{1}j_{1}-\dots-a_{r}j_{r}}{D}+r-1}{r-1}.\label{eq:2}
\end{equation}
\end{thm}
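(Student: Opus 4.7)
The plan is to exhibit the right-hand side of~\eqref{eq:2} as the average of the constituent polynomials $q_{0}(n),\dots,q_{D-1}(n)$, and then to show, by a direct cancellation of Fourier--Dedekind sums, that this average coincides with $\Poly_{\mathbf{a}}(n)$. The averaging statement is
\[
\Poly_{\mathbf{a}}(n) \;=\; \frac{1}{D}\sum_{k=0}^{D-1} q_k(n),
\]
which is the ``lemma that expresses the polynomial part as an average of the partition function'' alluded to in the abstract. Summing~\eqref{constituent.Fourier} over $k=0,\dots,D-1$ reproduces $\Poly_{\mathbf{a}}(n)$ exactly $D$ times, so what must be checked is that each Fourier--Dedekind piece collapses: interchanging sums in~\eqref{fourierdedsum} gives
\[
\sum_{k=0}^{D-1} s_{-k}(a_1,\dots,\widehat{a_i},\dots,a_r;a_i) \;=\; \frac{1}{a_i}\sum_{j=1}^{a_i-1}\frac{1}{\prod_{\ell\neq i}(1-\xi_{a_i}^{j a_\ell})}\sum_{k=0}^{D-1}\xi_{a_i}^{-jk},
\]
and since $a_i\mid D$ while $\xi_{a_i}^{-j}\neq 1$ for $1\le j\le a_i-1$, the inner geometric sum vanishes.

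Next I would upgrade Theorem~\ref{CimpoeasAndNicolae} to a polynomial identity for each individual constituent polynomial. Partition the box as $\mathbf{J} = \bigsqcup_{k=0}^{D-1}\mathbf{J}_k$, where $\mathbf{J}_k := \{\mathbf{j}\in\mathbf{J} : a_1j_1+\cdots+a_rj_r \equiv k \pmod{D}\}$, and define
\[
T_k(n) \;:=\; \sum_{\mathbf{j}\in\mathbf{J}_k}\binom{\frac{n-a_1 j_1-\cdots-a_r j_r}{D}+r-1}{r-1},
\]
which is a polynomial in $n$ of degree $r-1$. For every sufficiently large $n$ with $n\equiv k \pmod{D}$, Theorem~\ref{CimpoeasAndNicolae} reads $T_k(n) = p_{\mathbf{a}}(n) = q_k(n)$; since two polynomials that agree on an infinite arithmetic progression must be identical, $T_k(n) = q_k(n)$ as polynomials in $n$.

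Summing this identity over $k$ then yields
\[
\sum_{k=0}^{D-1} q_k(n) \;=\; \sum_{\mathbf{j}\in\mathbf{J}}\binom{\frac{n-a_1 j_1-\cdots-a_r j_r}{D}+r-1}{r-1},
\]
and dividing by $D$ and inserting the averaging lemma produces~\eqref{eq:2}. The main obstacle is the root-of-unity cancellation underlying the averaging lemma: one has to notice that $a_i \mid D$ is exactly the condition that makes the $k$-sum of $\xi_{a_i}^{-jk}$ vanish for every nonzero $j \pmod{a_i}$. Once this cleanly reduces the $k$-sum of Fourier--Dedekind terms to zero, the remainder is bookkeeping --- partitioning $\mathbf{J}$ by residue and invoking the standard fact that a polynomial of degree $r-1$ is determined by its values on any infinite arithmetic progression.
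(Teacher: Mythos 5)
Your proposal is correct and follows essentially the same route as the paper: the same averaging lemma $\Poly_{\mathbf{a}}(n)=\frac{1}{D}\sum_{k=0}^{D-1}q_k(n)$ proved by the same root-of-unity cancellation (using $a_i\mid D$), followed by partitioning $\mathbf{J}$ into residue classes and identifying each piece with a constituent polynomial via Theorem \ref{CimpoeasAndNicolae}. Your explicit remark that $T_k$ and $q_k$ agree as polynomials because they agree on an infinite arithmetic progression is a small point the paper leaves implicit, but the argument is otherwise identical.
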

This simple expression was also derived recently in \cite{Dilcher} from properties of Bernoulli numbers, and appeared several times in the literature under different but more complicated forms (see the introduction of \cite{Cimpoeas} for more details).

\section{A short proof}
Using a technique by F. Breuer in \cite{Breuer}, we give a short
proof of (\ref{eq:1}); then we deduce (\ref{eq:2}) from the identity in Lemma \ref{lem:The-polynomial-part} below. First, following \cite{Breuer}, the generating function
\[
\sum_{n\ge0}p_{\mathbf{a}}\left(n\right)z^{n}=\prod_{k=1}^{r}\frac{1}{1-z^{a_{k}}}
\]
is easily  transformed by forcing each term in the product to have the same
denominator $1-z^{D}:$
\[
\frac{1}{1-z^{a_{k}}}=\frac{\frac{1-z^{D}}{1-z^{a_{k}}}}{1-z^{D}}=\frac{\sum_{j_{k}=0}^{\frac{D}{a_{k}}-1}z^{a_{k}j_{k}}}{1-z^{D}}
\]
so that
\[
\sum_{n\ge0}p_{\mathbf{a}}\left(n\right)z^{n}=\frac{1}{\left(1-z^{D}\right)^{r}}\sum_{\mathbf{j}\in\mathbf{J}}z^{a_{1}j_{1}+\dots+a_{r}j_{r}}.
\]
Expanding the denominator
\[
\frac{1}{\left(1-z^{D}\right)^{r}}=\sum_{p=0}^{+\infty}\binom{p+r-1}{r-1}z^{pD}
\]
gives
\[
\sum_{n\ge0}p_{\mathbf{a}}\left(n\right)z^{n}=\sum_{p\ge0,\mathbf{j}\in\mathbf{J}}\binom{p+r-1}{r-1}z^{pD+a_{1}j_{1}+\dots+a_{r}j_{r}}.
\]
Identifying the coefficient of $z^{n}$ in the right-hand side gives
\[
p_{\mathbf{a}}\left(n\right)=\sum_{\text{\textbf{j}\ensuremath{\in\textbf{J}}}}\binom{\frac{n-a_{1}j_{1}-\dots-a_{r}j_{r}}{D}+r-1}{r-1},
\]
where the summation set consists of all indices $\mathbf{j}$ such
that 
\[
pD+a_{1}j_{1}+\dots+a_{r}j_{r}=n,
\]
and all integers $p\in\mathbb{Z}$ (note that we can allow $p<0$
since in this case the corresponding binomial coefficient $\binom{p+r-1}{r-1}$
vanishes). This is equivalent to the condition
\[
a_{1}j_{1}+\dots+a_{r}j_{r}\equiv n\mod D
\]
and gives (\ref{eq:1}).

\medskip
The next Lemma tells us that we can average these quasi-polynomials to get a simplified expression for the polynomial part $\Poly_{\bf a}(n)$. 
 
\begin{lemma}
\label{lem:The-polynomial-part}  
The polynomial part $\Poly_{\bf a}(n)$ of the restricted partition function  $p_{\bf a}(n)$ is the following average:
$$
\Poly_{\bf a}(n) = \frac{1}{D} \sum_{k=0}^{D-1} q_k(n).
$$
\end{lemma}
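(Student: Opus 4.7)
The plan is to exploit the Fourier--Dedekind expansion of the constituent polynomials already recalled in equation~(\ref{constituent.Fourier}): for every $k \in \{0,1,\dots,D-1\}$,
$$q_k(n) = \Poly_{\mathbf{a}}(n) + \sum_{i=1}^{r} s_{-k}(a_1,\dots,\widehat{a_i},\dots,a_r;\, a_i),$$
where $\widehat{a_i}$ means the $i$-th entry is omitted. The crucial observation is that the Fourier--Dedekind correction terms depend on $k$ only through the numerator $\xi_{a_i}^{-jk}$ in definition~(\ref{fourierdedsum}) and are constants with respect to $n$. First I would average this identity over $k = 0,1,\dots,D-1$; since $\Poly_{\mathbf{a}}(n)$ is independent of $k$ and survives the average unchanged, the lemma reduces to showing that
$$\frac{1}{D}\sum_{k=0}^{D-1} s_{-k}(a_1,\dots,\widehat{a_i},\dots,a_r;\, a_i) = 0$$
for each $i \in \{1,\dots,r\}$.

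For this remaining step, I would substitute the definition~(\ref{fourierdedsum}), interchange the finite summations over $j$ and $k$, and isolate the inner exponential sum $\frac{1}{D}\sum_{k=0}^{D-1} \xi_{a_i}^{-jk}$. Since $a_i \mid D$, writing $k = q\, a_i + s$ with $0 \le q < D/a_i$ and $0 \le s < a_i$ gives $\xi_{a_i}^{-jk} = \xi_{a_i}^{-js}$, and hence
$$\frac{1}{D}\sum_{k=0}^{D-1} \xi_{a_i}^{-jk} = \frac{1}{a_i}\sum_{s=0}^{a_i-1} \xi_{a_i}^{-js},$$
which vanishes for every $j \in \{1,\dots,a_i-1\}$ by the standard orthogonality of $a_i$-th roots of unity. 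This kills each term in the outer sum over $j$ and delivers the required zero.

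The main obstacle here is essentially notational rather than conceptual: once the Fourier--Dedekind decomposition (\ref{constituent.Fourier}) is in hand, the whole statement reduces to the divisibility $a_i \mid D$ together with orthogonality of roots of unity. The only subtle point worth underlining is that the $k$-dependence in $q_k(n)$ lives entirely in the Fourier--Dedekind constants, so averaging over $k$ preserves $\Poly_{\mathbf{a}}(n)$ exactly, with no polynomial contamination coming from the correction terms.
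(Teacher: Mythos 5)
Your proposal is correct and follows essentially the same route as the paper: average the Fourier--Dedekind decomposition \eqref{constituent.Fourier} over $k \bmod D$, swap the finite sums, and kill each correction term via $a_i \mid D$ and orthogonality of the $a_i$-th roots of unity. Your explicit reduction $\frac{1}{D}\sum_{k=0}^{D-1}\xi_{a_i}^{-jk}=\frac{1}{a_i}\sum_{s=0}^{a_i-1}\xi_{a_i}^{-js}$ is just a slightly more detailed rendering of the paper's remark that one is summing roots of unity over a multiple of a complete residue system mod $a_i$.
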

\begin{proof}
The main observation is that when we average over a complete residue system, the Fourier-Dedekind sums vanish. For example, for the first Fourier-Dedekind sum, averaging over  $k \pmod {D}$, the Fourier-Dedekind sum $s_{-k}(a_2, a_3, \dots, a_r; a_1)$ vanishes, as follows:

\begin{align} \label{vanishing}
&   \frac{1}{D} \sum_{k=0}^{D-1}  s_{-k}(a_2, a_3, \dots, a_r; a_1) = 
\frac{1}{D} \sum_{k=0}^{D-1}
 \frac 1 a_1 \sum_{j=1}^{a_1 -1} \frac{ \xi_{a_1}^{ jk} }{ \left( 1 - \xi_{a_1}^{ j a_2 } \right) \left( 1 - \xi_{a_1}^{ j a_3 } \right) \cdots \left( 1 - \xi_{a_1}^{ j a_r } \right) } \\
 &=
 \frac 1 a_1 \sum_{j=1}^{a_1 -1} \frac{             \frac{1}{D} \sum_{k=0}^{D-1}       \xi_{a_1}^{ jk} }
 { \left( 1 - \xi_{a_1}^{ j a_2 } \right) \left( 1 - \xi_{a_1}^{ j a_3 } \right) \cdots \left( 1 - \xi_{a_1}^{ j a_r } \right) } 
 = 0,
\end{align}
by the orthogonality relations, because $a_1 | D$, and so we are summing the $a_1$'th roots of unity over a multiple of the complete residue system mod $a_1$. 
Therefore, using \eqref{constituent.Fourier} and the fact that the polynomial part $\Poly_{\bf a}(n)$ is independent of $k$, we average the constituent polynomials  $q_k(n)$ over a complete residue system $k$ mod $D$ to get:
\begin{align*}
& \frac{1}{D} \sum_{k=0}^{D-1} q_k(n)  = \frac{1}{D} \sum_{k=0}^{D-1} \text{Poly}_{\bf a}(n)  
+  \frac{1}{D} \sum_{k=0}^{D-1}     s_{-k}(a_2, a_3, \dots, a_d; a_1)  \\
&+   \frac{1}{D} \sum_{k=0}^{D-1}    s_{-k}(a_1, a_3, a_4, \dots, a_d; a_2)  
+  \cdots +
 \frac{1}{D} \sum_{k=0}^{D-1}   s_{-k}(a_1, a_2, a_3, \dots, a_{d-1}; a_d) \\
 &= \text{Poly}_{\bf a}(n),
\end{align*}
by the vanishing argument of equation \ref{vanishing}.
\end{proof}

\noindent
Using this Lemma, we may now deduce from (\ref{eq:1}) that for $0\le k\le D-1,$
\[
p_{\mathbf{a}}\left(Dn+k\right)=\sum_{\stackrel{\mathbf{j}\in\mathbf{J}}{a_{1}j_{1}+\dots+a_{r}j_{r}\equiv k\thinspace\mod D}}\binom{\frac{Dn+k-a_{1}j_{1}-\dots-a_{r}j_{r}}{D}+r-1}{r-1}
\]
so that
\[
q_{k}\left(n\right)=p_{\mathbf{a}}\left(n\right)_{n\equiv k\mod D}=\sum_{\stackrel{\mathbf{j}\in\mathbf{J}}{a_{1}j_{1}+\dots+a_{r}j_{r}\equiv k\thinspace\mod D}}\binom{\frac{n-a_{1}j_{1}-\dots-a_{r}j_{r}}{D}+r-1}{r-1}.
\]
Hence, by Lemma \ref{lem:The-polynomial-part},
\begin{align*}
\text{Poly}_{\mathbf{a}}\left(n\right) & =\frac{1}{D}\sum_{k=0}^{D-1}\sum_{\stackrel{\mathbf{j}\in\mathbf{J}}{a_{1}j_{1}+\dots+a_{r}j_{r}\equiv k\thinspace\mod D}}\binom{\frac{n-a_{1}j_{1}-\dots-a_{r}j_{r}}{D}+r-1}{r-1}.
\end{align*}
Since the sum is over a complete residue system, this proves (\ref{eq:2}).

\begin{exm}
We let $\mathbf{a}=(2,3)$, so that we have
\[
p_{\left\{ 2,3\right\} }\left(n\right)=\sum_{\stackrel{0\le k\le2,0\le l\le1}{2k+3l\equiv n\mod6}}\left(\frac{n-2k-3l}{6}+1\right)
\]
so that
\[
p_{\left\{ 2,3\right\} }\left(6n\right)=n+1
\]
and
\[
q_{0}\left(n\right)=p_{\left\{ 2,3\right\} }\left(n\right)_{\vert n\equiv0\mod6}=\frac{n}{6}+1.
\]
Similarly,
\[
p_{\left\{ 2,3\right\} }\left(6n+1\right)=n
\]
and
\[
q_{1}\left(n\right)=p_{\left\{ 2,3\right\} }\left(n\right)_{\vert n\equiv1\mod6}=\frac{n-1}{6}
\]
and the other values are
\[
q_{2}\left(n\right)=p_{\left\{ 2,3\right\} }\left(n\right)_{\vert n\equiv2\mod6}=\frac{n-2}{6}+1,\thinspace\thinspace q_{3}\left(n\right)=p_{\left\{ 2,3\right\} }\left(n\right)_{\vert n\equiv3\mod6}=\frac{n-3}{6}+1
\]
and
\[
q_{4}\left(n\right)=p_{\left\{ 2,3\right\} }\left(n\right)_{\vert n\equiv4\mod6}=\frac{n-4}{6}+1,\thinspace\thinspace q_{5}\left(n\right)=p_{\left\{ 2,3\right\} }\left(n\right)_{\vert n\equiv5\mod6}=\frac{n-5}{6}+1.
\]
We can now check the result of Lemma \ref{lem:The-polynomial-part}: the polynomial
part is
\[
\frac{1}{6}\left(q_{0}\left(n\right)+\dots+q_{5}\left(n\right)\right)=\frac{n}{6}+\frac{5}{12}
\]
which matches the general formula for the polynomial part in two dimensions (\cite{Beck}, equation 1.7): 
\[
\text{Poly}_{\left\{ a,b\right\} }\left(n\right)=\frac{n}{ab}+\frac{1}{2a}+\frac{1}{2b}.
\]
\hfill $\square$
\end{exm}


\end{document}